\def\lc{\left\lceil}   
\def\rc{\right\rceil}
\def\lf{\left\lfloor}   
\def\rf{\right\rfloor}
\newtheorem{theorem}{\noindent Theorem}
\newtheorem{lemma}{\noindent Lemma}
\newtheorem{example}{\noindent Example}
\title{On the Erd{\H o}s--Hajnal problem in the case of 3-graphs}
\author{Danila Cherkashin\footnote{National Research University Higher School of Economics, Soyuza Pechatnikov str., 16, St. Petersburg, Russian Federation}}
\begin{document}

\maketitle

\begin{abstract}
Let $m(n,r)$ denote the minimal number of edges in an $n$-uniform hypergraph which is not $r$-colorable. 
For the broad history of the problem see~\cite{RaiSh}. 
It is known~\cite{cherkashin2018regular} that for a fixed $n$ the sequence
\[
\frac{m(n,r)}{r^n} 
\]
has a limit. 

The only trivial case is $n=2$ in which $m(2,r) = \binom{r+1}{2}$. In this note we focus on the case $n=3$.
First, we compare the existing methods in this case and then improve the lower bound.
\end{abstract}

\section{Introduction}

A hypergraph $H=(V,E)$ consists of a finite set of \textit{vertices}
$V$ and a family $E$ of the subsets of $V$,
which are called \textit{edges}. A hypergraph is called
\textit{$n$-uniform} if every edge has size $n$.
A \textit{vertex $r$-coloring} of a hypergraph $H = (V,E)$ is a map from $V$ to $\{1,\ldots,r\}$.
A coloring is \textit{proper} if there is no monochromatic edges, i.e., any
edge $e\in E$ contains two vertices
of different color.
The \textit{chromatic number} of a hypergraph $H$ is the smallest number $\chi(H)$ such that there exists a proper $\chi(H)$-coloring of $H$.
Let $m(n,r)$ be the minimal number of edges in an $n$-uniform hypergraph with chromatic number more than $r$.

We are interested in the case when $n$ is much smaller than $r$ (see~\cite{RaiSh} for general case and related problems).
Erd{\H o}s and Hajnal~\cite{erdHos1961property} introduced problems on determining $m(n,r)$ and related quantitites.

\subsection{Upper bounds}


Erd{\H o}s conjectured~\cite{erdos1979some} that 
\[
m(n,r) = \binom{(n-1)r+1}{n},
\]
for $r > r_0(n)$, that
is achieved on the complete hypergraph.

However Alon~\cite{alon1985hypergraphs} disproved the conjecture by using the
estimate
\[
m(n,r) < \min_{a \geq 0} T(r(n + a - 1) + 1, n + a, n),
\]
where the Tur{\'a}n number $T(v,k,n)$ is the smallest number of edges in an $n$-uniform hypergraph on $v$ vertices
such that every induced subgraph on $k$ vertices contains an edge. 
Different bound on Tur{\'a}n numbers give better refine the complete hypergraph construction when $n > 3$ (see~\cite{sidor} for a survey).
So the case $ n =3$ is in some sense the most interest.

Also note, that using the same inequality with better bounds on Tur{\'a}n numbers Akolzin and Shabanov~\cite{akolzin2016colorings} showed that
\[
m(n,r) < Cn^3\ln n \cdot r^n.
\]

Alon conjectured that the sequence $m(n,r)/r^n$ has a limit which was proved by Cherkashin and Petrov~\cite{cherkashin2018regular}.
Denote the corresponding limit be $L_n$. In this paper we are interested in estimates on $L_3$. The best known upper bound follows from the complete hypergraph:
\[
L_3 \leq \frac{4}{3}.
\]

\subsection{Lower bounds}

There are several ways to show an inequality of type $m(n,r) \geq c(n) r^n$ (i.e. $L_n \geq c(n)$). 
Note that Erd{\H o}s--Hajnal conjecture implies in particular that 
\[
L_n = \frac{(n-1)^n}{n!}.
\]

Alon~\cite{alon1985hypergraphs} suggested to color vertices of an $n$-uniform hypergraph in $a < r$ colors uniformly and independently, 
and then recolor a vertex in every monochromatic edge in unused color.
The expected number of monochromatic edges is 
\[
|E| \cdot a^{1-n}.
\]
Note that we have $r-a$ remaining colors, and we can color $n-1$ vertices in each unused color such that no new monochromatic edge appears.
Summing up, if
\[
|E| < a^{n-1} (r-a)(n-1)
\]
then a hypergraph $H = (V,E)$ has a proper $r$-coloring.
Substituting $a = \lf\frac{n-1}{n}r\rf$, we get
\[
m(n,r) \geq (n-1) \lc \frac{r}{n} \rc \lf \frac{n-1}{n}r \rf ^{n-1}.
\]
This method gives $L_3 \geq 8/27 > 0.296$.

Another way is due to Pluh{\'a}r~\cite{Pl}. He introduced the following useful notion.
A sequence of edges $a_1,\ldots, a_r$ is an \textit{$r$-chain} if $|a_i\cap a_j| = 1$ if $|i - j| = 1$ and   $a_i \cap a_j = \emptyset$ otherwise;
it is an \textit{ordered $r$-chain} if $i < j$ implies that every vertex of $a_i$ is not bigger than any vertex of $a_j$
(with respect to a certain fixed linear ordering on $V$).

Pluh{\'a}r's theorem states that existence of an order on $V$ without ordered $r$-chains is equivalent to $r$-colorability of $H = (V,E)$.
Let us prove a lower bound on $m(n,r)$ via this theorem. Consider a random order on the vertex set. 
Note that the probability of an $r$-chain to be ordered is 
\[
\frac{[(n-1)!]^2[(n-2)!]^{r-2}}{((n-1)r+1)!}.
\]
From the other hand, the number of $r$-chains is at most $2|E|^r/r!$ since every set of $r$ edges generates at most 2 chains.
So if 
\[
2\frac{|E|^r}{r!} \frac{[(n-1)!]^2[(n-2)!]^{r-2}}{((n-1)r+1)!} < 1,
\]
then we have a proper $r$-coloring of $H$.
After taking $r$-root and some calculations we have
\[
m(n,r) > c\sqrt{n} r^n,
\]
and in particular $L_3 \geq 4/e^3 > 0.199$.

Combining two previous arguments with Cherkashin--Kozik approach~\cite{cherkashin2015note} Akolzin and Shabanov~\cite{akolzin2016colorings} proved that
\[
m(n,r) \geq c\frac{n}{\ln n}r^n,
\]
without explicit bounds on $c$. We show that this method gives the bound $L_3 > 0.205$ in Section 3.

Cherkashin and Petrov~\cite{cherkashin2018regular} suggested an approach, based on the evaluation of the inverse function, to show that the sequence $m(n,r)/r^n$ has a limit. Denote by $f(N)$ the maximal possible chromatic number of an $n$-uniform hypergraph with $N$ edges. 
Also $f(0)=1$ by agreement. The function $f$ non-strictly increases and 
satisfies
\[
m(n,r)=\min\{N: f(N) > r\}.
\]
Therefore $m(n,r) \sim C r^n$ if and only if $f(N) \sim (N/C)^{1/n}$. 
The following lemmas were proved in~\cite{cherkashin2018regular}.

\begin{lemma}\label{cru}
For any $N > 0$ and any positive integer $p$ we have 
\[
f(N)\leqslant \max_{a_1+a_2+\dots +a_p\leqslant N/p^{n-1}} f(a_1)+f(a_2)+\dots+f(a_p).
\]
\end{lemma}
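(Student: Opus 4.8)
The plan is to apply a random-partition (first-moment) argument to an extremal hypergraph. Fix an $n$-uniform hypergraph $H=(V,E)$ with $|E|=N$ and $\chi(H)=f(N)$. Partition $V$ into classes $V_1,\dots,V_p$ by sending each vertex independently to a uniformly random class. Call an edge \emph{internal} if all of its $n$ vertices fall into a single class; a fixed edge is internal with probability $p\cdot p^{-n}=p^{1-n}$, so the expected number of internal edges equals $N/p^{n-1}$. Fix a partition for which the number of internal edges is at most this expectation, and let $a_i$ be the number of edges of $H$ contained in $V_i$, so that $a_1+a_2+\dots+a_p\le N/p^{n-1}$.

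Next I would color $H$ using this partition. The sub-hypergraph $H_i$ induced on $V_i$ has exactly $a_i$ edges, hence admits a proper coloring with at most $f(a_i)$ colors; take such colorings with pairwise disjoint palettes and define a coloring of $V$ by using, on each $V_i$, its own coloring of $H_i$. This coloring of $H$ is proper: an internal edge lies in some $H_i$ and is non-monochromatic because the coloring of $H_i$ is proper, while an edge meeting at least two classes already contains two vertices whose colors come from different palettes. Altogether we used $f(a_1)+\dots+f(a_p)$ colors, so $f(N)=\chi(H)\le f(a_1)+\dots+f(a_p)$. Since the $a_i$ are nonnegative integers with $a_1+\dots+a_p\le N/p^{n-1}$, the right-hand side is bounded by the maximum of $f(a_1)+\dots+f(a_p)$ over all such tuples, which is exactly the asserted inequality.

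The point worth emphasizing — rather than a genuine obstacle — is that edges crossing between classes need no care at all: the disjointness of the palettes makes them automatically non-monochromatic, so the entire coloring burden is pushed onto the induced subhypergraphs $H_i$, where the definition of $f$ takes over. It is precisely the computation $p\cdot p^{-n}=p^{1-n}$ that produces the denominator $p^{n-1}$ in the constraint. No step is delicate; the proof is a single averaging argument combined with the subadditive-type use of the definition of $f$.
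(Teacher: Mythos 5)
Your proof is correct and follows essentially the same route as the original argument in Cherkashin--Petrov (which this paper cites rather than reproves): a uniformly random partition into $p$ classes, a first-moment bound giving at most $N/p^{n-1}$ internal edges, and proper colorings of the induced subhypergraphs with pairwise disjoint palettes. No issues.
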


\begin{lemma}\label{segment}
Denote $c_n=(1-2^{1-n})^{-n}$.
For any
$M>0$ the inequality
\[
f(N)\leqslant 
N^{1/n}\cdot \max_{M\leqslant a< c_n M} f(a)\cdot a^{-1/n}
\]
holds for all $N\geqslant M$.
\end{lemma}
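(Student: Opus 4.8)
The plan is to recast the claim as: $f(a)\le D\,a^{1/n}$ for all $a\ge M$, where $D:=\max_{M\le a<c_nM}f(a)\,a^{-1/n}$, and then to argue by a minimal counterexample using Lemma~\ref{cru}. First I would note that if some $a\ge M$ violates the bound, then (as $f$ is a non-decreasing step function) there is a least such value $N$, and $N\ge c_nM$ since on $[M,c_nM)$ one has $f(a)/a^{1/n}\le D$ by definition of $D$. Consequently $f(a)\le D\,a^{1/n}$ for all $a\in[M,N)$, and $f(a)\le f(M)\le D\,M^{1/n}$ for $0\le a<M$ by monotonicity, so $f(a)\le D\,\max(a,M)^{1/n}$ throughout $[0,N)$.

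Next I would apply Lemma~\ref{cru} with a suitably chosen integer $p\ge 2$, obtaining $f(N)\le\sum_{i=1}^p f(a_i)$ with $a_i\ge0$ and $\sum a_i\le B:=N/p^{n-1}$; since $p\ge2$ each $a_i\le B<N$, so $f(a_i)\le D\,\max(a_i,M)^{1/n}$. Writing $m$ for the number of $i$ with $a_i\ge M$, bounding those terms by the power-mean inequality (concavity of $t\mapsto t^{1/n}$) and the remaining $p-m$ terms by $D\,M^{1/n}$ each, I get $f(N)\le D\bigl(m^{(n-1)/n}B^{1/n}+(p-m)M^{1/n}\bigr)=:D\,\psi(m)$, with $\psi$ concave on $[0,p]$ and unconstrained maximizer $m^\star=\bigl(\tfrac{n-1}{n}\bigr)^{n}B/M$. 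Choosing $p=\bigl\lfloor\tfrac{n-1}{n}(N/M)^{1/n}\bigr\rfloor$ makes $m^\star\ge p$, so $\psi$ is increasing on the feasible range and maximized at $m=p$, where $\psi(p)=p^{(n-1)/n}B^{1/n}=N^{1/n}$; this yields $f(N)\le D\,N^{1/n}$, contradicting the choice of $N$.

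The hard part will be that this choice of $p$ is only legal ($p\ge2$) once $N\ge(\tfrac{2n}{n-1})^{n}M$, so the bounded window $c_nM\le N<(\tfrac{2n}{n-1})^{n}M$ needs its own treatment: there I would instead apply Lemma~\ref{cru} with a small $p$, arranging $B<c_nM$ so that every part lands in $[M,c_nM)$ or below $M$, and verify the inequality by hand. It is this verification that fixes the constant, the extremal configuration being a part near $c_nM$ together with parts just below $M$, and $1-2^{1-n}=c_n^{-1/n}$ being exactly the ratio that makes $(c_nM/N)^{1/n}+(M/N)^{1/n}\le 1$ work out. The genuinely delicate point is that a part below $M$ admits only the crude bound $f(a_i)\le f(M)$: controlling this loss against the budget $B=N/p^{n-1}$ is what simultaneously selects the optimal $p\approx\tfrac{n-1}{n}(N/M)^{1/n}$ and dictates $c_n=(1-2^{1-n})^{-n}$, and reconciling it with the integrality of $p$ in the intermediate window is where the most care is required.
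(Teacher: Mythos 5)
Your reduction to the bound $f(a)\le D\max(a,M)^{1/n}$ on $[0,N)$ via a minimal counterexample is fine, and your analysis of the large-$N$ regime is correct: with $p=\lfloor\tfrac{n-1}{n}(N/M)^{1/n}\rfloor$ the concave function $\psi$ is indeed maximized at $m=p$ with value $N^{1/n}$, so the step goes through once $N\ge(\tfrac{2n}{n-1})^nM$ (which is $27M$ for $n=3$). The genuine gap is the window $c_nM\le N<(\tfrac{2n}{n-1})^nM$, which you defer to a ``verification by hand'' --- and the verification you sketch cannot work. The inequality you name as the crux, $(c_nM/N)^{1/n}+(M/N)^{1/n}\le 1$, is false at the bottom of the window: at $N=c_nM$ it reads $1+c_n^{-1/n}\le 1$, i.e.\ $1+\tfrac34\le 1$ for $n=3$. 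So the identity $c_n^{-1/n}=1-2^{1-n}$ does not ``make it work out'' there.

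Worse, for $N$ just above $c_nM$ the whole scheme of Lemma~\ref{cru} plus monotonicity breaks down, for any $p$. Take $n=3$ and $N=c_3M=\tfrac{64}{27}M$. Every admissible $p\ge 2$ gives budget $B=N/p^{2}\le N/4<M$, so \emph{all} parts land strictly below $M$, where the only available bound is $f(a_i)\le f(M)\le DM^{1/3}$; this yields $f(N)\le pDM^{1/3}\ge 2DM^{1/3}$, whereas the target is $DN^{1/3}=Dc_3^{1/3}M^{1/3}=\tfrac43DM^{1/3}$. The same obstruction persists throughout $[c_nM,2^nM)$, since even the all-small configuration forces $pM^{1/n}\le N^{1/n}$, i.e.\ $N\ge 2^nM$. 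So on this range the statement cannot be extracted from Lemma~\ref{cru} together with $f(a)\le f(M)$ for $a<M$ alone; some further input about $f$ below $M$ (or a different recursion) is needed, and that is precisely where the constant $c_n$ has to be earned. Note that the present paper does not reprove this lemma but cites~\cite{cherkashin2018regular}; before relying on your argument you should consult that proof for how the window is actually handled, because as written your proposal does not establish the stated constant.
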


It is known that $f(0) = 1$, $f(1) = \ldots = f(6) = 2$, $f(7) = \ldots = f(26) = 3$ (see~\cite{akolzin20183}).
Lemmas~\ref{cru},~\ref{segment} and computer calculations was used to get 
\[
L_3 > 0.324.
\]

One more way to get a (very weak) bound $L_n \geq n^{-n}$ appeared in the same paper~\cite{cherkashin2018regular}. 
It is based on the straightforward induction on $r$: it was shown that there is a large independent set, so we can color it in color $r$ and apply the inductive assumption.

The contribution of the paper is the following theorem, which is proved by refining Pluhar approach via inducibility arguments.
\begin{theorem}
\[
L_3 \geq \frac{4}{e^2} > 0.54.
\]
\label{hahaha}
\end{theorem}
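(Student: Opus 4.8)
The plan is to run Pluh\'ar's first-moment argument exactly as recalled in the introduction, but to feed it a bound on the number of $r$-chains that is sharper by a factor $e^{r}$. Write $m=|E|$ and let $N_r(H)$ denote the number of ordered $r$-chains, viewed as edge-sequences, in a $3$-graph $H$. Pluh\'ar's estimate $N_r(H)\le 2\binom{m}{r}\le 2m^{r}/r!$ is of order $(e\,m/r)^{r}$ by Stirling; the key claim is that the truth is $e^{r}$ times smaller:
\[
N_r(H)\ \le\ \Big(\tfrac{m}{r}\Big)^{r}\cdot e^{o(r)}\qquad(r\to\infty),
\]
uniformly over $3$-graphs $H$ with $m=\Theta(r^{3})$ edges. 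This is the ``inducibility'' input: although an $r$-chain is an ordered $r$-tuple of edges, the $r$-subsets of $E$ carrying the loose-path intersection pattern form only an $e^{-r+o(r)}$ fraction of all $r$-subsets, since such subsets must be ``layered'' --- no edge can occupy more than a bounded number of positions of a given chain, and in the extremal configuration essentially one.

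Granting the claim, the theorem follows. If $H=(V,E)$ is not $r$-colorable, then by Pluh\'ar's theorem every linear order on $V$ contains an ordered $r$-chain, so for a uniformly random order the expected number of them is at least $1$. That expectation equals $N_r(H)\cdot\frac{4}{(2r+1)!}$ (the probability of a fixed $r$-chain being ordered, from the introduction specialized to $n=3$), hence $N_r(H)\ge\frac{(2r+1)!}{4}$. Combining with the claim gives $\big(\tfrac{m}{r}\big)^{r}\ge\frac{(2r+1)!}{4}\,e^{-o(r)}$; taking $r$-th roots and using $\big((2r+1)!\big)^{1/r}\sim\big(\tfrac{2r}{e}\big)^{2}=\frac{4r^{2}}{e^{2}}$ yields $\frac{m}{r}\ge(1-o(1))\frac{4r^{2}}{e^{2}}$, i.e. $m(3,r)\ge(1-o(1))\frac{4}{e^{2}}r^{3}$. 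Since $L_3=\lim_{r}m(3,r)/r^{3}$, this gives $L_3\ge 4/e^{2}$.

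To prove the claim I would proceed by a midpoint-splitting recursion. The extremal $H$ for $N_r$ should be a ``path of bundles'': a spine $v_{0},v_{1},\dots,v_{r}$ together with, between each $v_{i-1}$ and $v_{i}$, a bundle of $m/r$ parallel triples $\{v_{i-1},x,v_{i}\}$; there every $r$-chain uses exactly one edge from each bundle, in order, so $N_r=2(m/r)^{r}$. For general $H$, split a loose path at its central link vertex to get $N_r(H)\le\sum_{v}g_{\lceil r/2\rceil}(v)\,g_{\lfloor r/2\rfloor}(v)$, where $g_k(v)$ counts length-$k$ loose-path-sequences whose last edge contains $v$ with $v$ the forward link. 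By Cauchy--Schwarz together with $\sum_v g_k(v)\le 3N_k(H)$ this is at most $\prod_{k}\big(3N_k(H)\cdot\max_v g_k(v)\big)^{1/2}$; one then bounds $\max_v g_k(v)$ by one or two further splits (now with endpoints pinned), the equality case at each step being the bundle configuration, and closes the recursion so that an AM--GM over the $r$ ``layers'' produces exactly the $(m/r)^{r}$. It might help to reduce first to an (almost) regular $H$, using the machinery behind Lemmas~\ref{cru}--\ref{segment} from~\cite{cherkashin2018regular}, at a cost subexponential in $r$ and hence harmless for $L_3$.

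The hard part is establishing this claim in full generality. For locally dense or very irregular $3$-graphs --- say the complete $3$-graph on $2r+1$ vertices --- a single edge can sit at many positions of an $r$-chain, so any estimate of the shape ``an edge meets at most $3(\Delta-1)$ others'' is hopelessly lossy; one must exploit the exact loose-path constraints $|a_i\cap a_{i+1}|=1$ and $a_i\cap a_j=\emptyset$ for $|i-j|\ge2$ --- equivalently, that in a vertex-ordered chain the link vertices strictly increase while each private vertex is sandwiched between two consecutive links --- so that the $r$ positions behave almost independently. The truly delicate issue is quantitative bookkeeping: each naive counting step typically loses a factor of $2$ (from replacing ``meets in exactly one vertex'' by ``meets in at least one vertex''), and all of these must be shaved so that the total slack is $e^{o(r)}$ rather than $C^{r}$; the latter would only give $L_3\ge 4/(Ce^{2})$, which for $C\ge e$ would be no improvement at all.
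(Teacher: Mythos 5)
Your reduction of the theorem to the chain-count claim is exactly right, and it matches the paper's strategy: the whole content of the improvement from $4/e^{3}$ to $4/e^{2}$ is replacing Pluh\'ar's bound $2m^{r}/r!\approx (em/r)^{r}$ on the number of $r$-chains by a bound of order $(m/r)^{r}$, after which the first-moment computation and the Stirling asymptotics $\bigl((2r+1)!\bigr)^{1/r}\sim 4r^{2}/e^{2}$ go through verbatim. But you have not proved that claim; you state it, sketch a midpoint-splitting recursion, and then candidly list the reasons it might not close (irregular hypergraphs, per-step constant losses that must sum to $e^{o(r)}$). As written, the proposal therefore has a genuine gap at its central step, and the proposed route to fill it is the delicate one rather than the short one.

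The paper closes this gap with an inducibility argument that sidesteps all of the difficulties you worry about. Form the auxiliary graph $G$ on vertex set $E$ in which two hyperedges are adjacent iff they intersect in exactly one vertex. Every $r$-chain of $H$ induces a path $P_{r}$ in $G$ (adjacent edges of the chain meet in one vertex; non-adjacent ones are disjoint, hence non-adjacent in $G$ --- and a non-edge of $G$ cannot hide a large intersection inside a chain). The Pippenger--Golumbic bound $I(G,P_{r})\le \frac{N}{2}\bigl(\frac{N}{r-1}\bigr)^{r-1}$ for an $N$-vertex graph then gives at most $\frac{m}{2}\bigl(\frac{m}{r-1}\bigr)^{r-1}$ chains, which differs from your target $(m/r)^{r}$ only by a factor $\frac{r}{2}\bigl(\frac{r}{r-1}\bigr)^{r-1}=O(r)=e^{o(r)}$. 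The proof of Pippenger--Golumbic is a clean sequential-choice induction ($X(q,l)\le (l/q)^{q}$) whose only loss is polynomial, so the complete-$3$-graph example and the ``factor of $2$ per step'' issue never arise. If you want to salvage your write-up, replace the unproved claim and its sketch by this auxiliary-graph reduction plus the Pippenger--Golumbic lemma; the rest of your argument is correct as it stands.
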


\paragraph{Structure of the paper.} In Section~\ref{ind} we show how to apply inducibility to the chain argument and proof Theorem~\ref{hahaha}.
In Section 3 we find the constant in Akolzin--Shabanov theorem for $n = 3$ and show that even if we apply Theorem~\ref{main} to the corresponding part of the proof, the constant will be still worse than in Theorem~\ref{hahaha}.

\section{Inducibility tool}
\label{ind}

\begin{theorem}
\label{main}
Suppose $H = (V,E)$ is a hypergraph. Then it has at most
\[
\frac{|E|}{2}\left ( \frac{|E|}{r-1} \right )^{r-1}
\]
$r$-chains.
\end{theorem}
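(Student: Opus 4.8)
The plan is to count \emph{ordered} $r$-chains --- sequences $(a_1,\dots,a_r)$ --- and then to divide by two. Since (for a non-degenerate hypergraph, say with all edges of size at least $2$) the edges of a chain are pairwise distinct, a chain and its reversal are different sequences, so each $r$-chain is represented by exactly two ordered ones; hence it suffices to show that $H$ has at most $|E|\bigl(|E|/(r-1)\bigr)^{r-1}$ ordered $r$-chains (I take $r\ge 2$).

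The crucial point --- where the ``inducedness'' of a chain is exploited --- is the following. Fix an ordered $r$-chain $(a_1,\dots,a_r)$ and put, for $i=1,\dots,r-1$,
\[
S_i=\{e\in E:\ e\cap a_j=\varnothing\ \text{for all}\ j<i\},\qquad
B_i=\{e\in S_i:\ |e\cap a_i|=1\}.
\]
Then $S_1=E\supseteq S_2\supseteq\cdots$, and the defining properties of a chain give $a_{i+1}\in B_i$ for every $i$ (it is disjoint from $a_1,\dots,a_{i-1}$ and meets $a_i$ in exactly one vertex). Moreover, if $i<i'$ then $B_{i'}\subseteq S_{i'}\subseteq S_{i+1}$, so every member of $B_{i'}$ is disjoint from $a_i$, while every member of $B_i$ meets $a_i$; hence $B_1,\dots,B_{r-1}$ are \emph{pairwise disjoint} subsets of $E$, and in particular $|B_1|+\dots+|B_{r-1}|\le|E|$. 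This is the heart of the matter: the edges usable as the ``next edge'' at the $r-1$ successive steps cannot overlap, so their sizes add up rather than each being as large as $|E|$.

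To exploit this I would introduce, for $S\subseteq E$ and an edge $e$, the quantity $g_k(S,e)$ equal to the number of $k$-chains $(c_1,\dots,c_k)$ with all $c_i\in S$, with $|c_1\cap e|=1$, and with $c_j\cap e=\varnothing$ for $2\le j\le k$. Peeling off $c_1$ gives the exact recursion
\[
g_k(S,e)=\sum_{c_1\in P} g_{k-1}(S_e,c_1),\qquad
P=\{c\in S:|c\cap e|=1\},\quad S_e=\{c\in S:c\cap e=\varnothing\},
\]
and $P,S_e$ are disjoint subsets of $S$, so $|P|+|S_e|\le|S|$. An induction on $k$ (base case $g_1(S,e)=|P|\le|S|$) then yields $g_k(S,e)\le(|S|/k)^k$, the inductive step being the AM--GM inequality applied to the $k$ numbers $|P|,\tfrac{|S_e|}{k-1},\dots,\tfrac{|S_e|}{k-1}$, whose sum is $|P|+|S_e|\le|S|$. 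Finally, every ordered $r$-chain is obtained by choosing its first edge $a_1\in E$ and then an $(r-1)$-chain counted by $g_{r-1}(E,a_1)$; hence there are at most $|E|\cdot(|E|/(r-1))^{r-1}$ ordered $r$-chains, and halving finishes the proof.

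I expect the only real obstacle to be the middle paragraph --- noticing that the disjointness condition in the definition of a chain forces the $r-1$ ``branching sets'' $B_i$ to be disjoint. A naive induction that simply strips off the last edge, without this observation, loses a factor of order $(r-1)!$ instead of $(r-1)^{r-1}$ and gives a far weaker bound; once the disjointness of the $B_i$ is in hand, the rest is a bookkeeping recursion plus AM--GM.
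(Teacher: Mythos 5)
Your proof is correct and is essentially the paper's argument: the paper passes to the auxiliary graph on $E$ joining two hyperedges when they meet in exactly one vertex, observes that every $r$-chain becomes an \emph{induced} path $P_r$ there, and invokes the Pippenger--Golumbic inducibility bound, whose proof is precisely your sequential-choice recursion with the pairwise disjoint eligibility sets and the estimate $X(q,l)\le (l/q)^q$. You have simply inlined that lemma directly on the hypergraph (your disjointness of the $B_i$ is exactly where the paper uses inducedness), so the mathematical content is identical.
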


We need a notion of inducibility. Denote by $I(G,H)$ the number of induced subgraphs in $G$, isomorphic to $H$.
The following simple bound was proved by Pippenger and Golumbic.

\begin{lemma}[Pippenger--Golumbic~\cite{pippenger1975inducibility}]
\label{PG}
Let $G$ be a graph on $n$ vertices. Then
\[
I(G, P_r) \leq \frac{n}{2} \left ( \frac{n}{r-1} \right )^{r-1}.
\]
\end{lemma}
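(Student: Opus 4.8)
The plan is to count \emph{ordered} induced paths rather than subgraphs. For $k\ge 1$ let $t_k(G)$ be the number of sequences $(v_1,\dots,v_k)$ of distinct vertices of $G$ with $v_iv_{i+1}\in E(G)$ for $1\le i\le k-1$ and $v_iv_j\notin E(G)$ whenever $|i-j|\ge 2$. For $r\ge 2$ every induced subgraph of $G$ isomorphic to $P_r$ admits exactly two such orderings, so $I(G,P_r)=\tfrac12\,t_r(G)$, and it is enough to prove
\[
t_r(G)\ \le\ n\left(\frac{n}{r-1}\right)^{r-1}.
\]
I would prove this by induction on $r$, the base case $r=2$ being immediate since $t_2(G)=2|E(G)|\le n(n-1)<n^2$.

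For the inductive step I would peel off an endpoint. Extending an ordered induced path $(v_1,\dots,v_{r-1})$ to an ordered induced path $(v_1,\dots,v_{r-1},v_r)$ amounts exactly to choosing
\[
v_r\ \in\ N(v_{r-1})\setminus\left(N[v_1]\cup N[v_2]\cup\cdots\cup N[v_{r-2}]\right),
\]
which is a bijection between extensions and admissible vertices $v_r$; hence $t_r(G)$ is the sum, over all ordered induced $P_{r-1}$'s, of the number of admissible $v_r$. That number is at most $|N(v_{r-1})|$, but also at most $n-|N[v_1]\cup\cdots\cup N[v_{r-2}]|$, since $v_r$ must avoid those $r-2$ closed neighbourhoods. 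The plan is to play these two bounds against each other — the first helps when $G$ is sparse, the second when $G$ is dense — and, after symmetrizing over which endpoint is peeled off and which internal vertices are inspected, to deduce by a convexity / AM--GM estimate that the number of admissible extensions is, on average over all ordered induced $P_{r-1}$'s, at most $\tfrac{n}{r-1}$. Combined with the inductive hypothesis (and a bit of slack already present in the low-order cases) this would close the induction at the value $n^r/(r-1)^{r-1}$.

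The hard part is exactly this averaging step, as it is the only place where ``induced'' is used in an essential way. Bounding the number of choices for $v_r$ merely by the maximum degree returns only the trivial $t_r\le n^r$ — the number of walks on $r$ distinct vertices — which is blind to induced-ness (for instance $K_n$ has no induced $P_r$ at all once $r\ge3$). Moreover the extremal example, a balanced complete bipartite graph, shows that one cannot rely on either $|N(v_{r-1})|$ or $n-|N[v_1]|$ being small on its own, so the estimate must exploit the pairwise non-adjacency of \emph{all} of $v_1,\dots,v_{r-1}$ simultaneously: heuristically, the $r-2$ earlier closed neighbourhoods together with $N[v_{r-1}]$ cover $V(G)$ efficiently enough to leave only about $n/(r-1)$ vertices available for $v_r$. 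Packaging this interaction into a single clean inequality — the exponent $r-1$ in the target bound reflecting $r-1$ growth steps, each costing a factor $n/(r-1)$ — is the crux; once it is in hand, the induction itself is routine bookkeeping.
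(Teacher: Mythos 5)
Your setup is the right one---counting ordered induced paths, the factor of $2$, and extending a path one endpoint at a time---but there is a genuine gap exactly where you flag it: the ``averaging step'' is asserted rather than proved, and it is not actually the statement that makes the argument work. The missing idea is a \emph{disjointness} property, not an averaging one. When you build an ordered induced path $w_1,\dots,w_r$ step by step, let $S_i$ be the set of vertices eligible to serve as $w_i$, i.e.\ neighbours of $w_{i-1}$ lying outside $N[w_1]\cup\dots\cup N[w_{i-2}]$. Any $v\in S_i$ is adjacent to $w_{i-1}$, whereas eligibility at any later step $j>i$ requires non-adjacency to $w_{i-1}$; hence $S_i\cap S_j=\emptyset$ for $i\neq j$ along any single run of the construction. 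Consequently $\sum_{i=2}^{r}|S_i|\le n$, and the number of ways to complete the path from a fixed first vertex is at most $\prod_{i=2}^{r}|S_i|\le \left(n/(r-1)\right)^{r-1}$ by AM--GM. This is a bound on the product of eligibility-set sizes \emph{within one run}, maximized over all runs---the paper formalizes it through the recursion $X(q,l)\le\max_{m} m\,X(q-1,l-m)\le (l/q)^q$---and it is precisely where induced-ness is used.

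Moreover, even if your averaging claim were granted, the induction as sketched would not close. From $t_{r-1}\le n^{r-1}/(r-2)^{r-2}$ and an average of at most $n/(r-1)$ admissible extensions per ordered induced $P_{r-1}$ you would only get $t_r\le n^{r}/\bigl((r-2)^{r-2}(r-1)\bigr)$, which exceeds the target $n^{r}/(r-1)^{r-1}$ because $(r-2)^{r-2}<(r-1)^{r-2}$. A fixed per-step cost of $n/(r-1)$, independent of how much of the vertex budget earlier steps consumed, cannot telescope to the right answer; you must carry the depleted budget $l-m$ through the recursion, which is exactly what the disjointness of the sets $S_i$ provides.
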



It turns out that the bound is close to optimal.
\begin{example}
Let $n$ be the $k$-th power of $r+1$, $r > 2$.
Consider the blow-up of $C_{r+1}$ where the same construction is placed inside the blow-up of each vertex. Every $r$-chain 
has vertices only in the different copies. Hence the number of chains is
\[
\sum_{i=1}^{k} \frac{n^r}{(r+1)^{i(r-1)}}.
\]
\end{example}

\begin{proof}[Proof of Lemma~\ref{PG}]
Let $X(q, l)$ denote the largest possible number of ways of sequentially
choosing $q$ objects $w_0, w_1 , \dots, w_{q-1}$ from among $l$ objects, subject to rules
whereby the set of objects that are eligible to be chosen as $w_i$ depends only
on the previous choices $w_0, w_1, \dots, w_{i-1}$, and whereby no object that is
eligible to be chosen as $w_i$ will be eligible to be chosen as $w_j$ for any
$i + 1 \leq j \leq q - 1$. Clearly, $X(0, l) = 1$. If $q > 0$, let $m$ denote the
number of objects eligible to be chosen as $w_0$. For any choice of $w_0$,
the remaining $q - 1$ objects can be chosen in at most $X(q - 1, l - m)$ ways. Thus
\[
X (q, l) \leq \max_{1\leq m \leq l} m X(q-1, l-m-1) \leq \max_{1\leq m \leq l} m X(q-1, l-m).
\]
From these relations, we obtain
\[
X(q,l) \leq \left ( \frac{l}{q} \right )^q.
\]
by induction on $q$: the base $q = 1$ is obvious. To prove the step it is enough to maximize the right-hand side of
\[
X (q, l) \leq  \max_{1\leq m \leq l} m \left (\frac{l-m}{q-1} \right )^{q-1}.
\]
Taking derivative, we get the maximum at $m = l/q$, and we are done.

Also there are $n$ ways to choose the first vertex.
Obviously, it is an upper bound on the number of induced $r$-paths, multiplied by 2, because every path is counted twice.
\end{proof}

\begin{proof}[Proof of Theorem~\ref{main}]
Consider an auxiliary graph $G = (E, F)$ with vertex set is the edge set of $H$ and edges connect pairs of graph vertices, which intersect (as hyperedges) on exactly one vertex.
The number of $r$-chains is at most the number of induced graphs $P_r$ in $G$, because every $r$-chain forms induced $P_r$ (note that the reverse consequence is wrong,
because non-edge in $G$ can mean that the corresponding hyperedges have large intersection, which is impossible in $r$-chain).
Hence, Lemma~\ref{PG} finishes the proof.
\end{proof}

\begin{proof}[Proof of Theorem~\ref{hahaha}]
Let us try to color by the Pluhar greedy algorithm.
Recall that the probability of an $r$-chain to be ordered is 
\[
\frac{[(n-1)!]^2[(n-2)!]^{r-2}}{((n-1)r+1)!} = \frac{4}{(2r+1)!}.
\]
Using Theorem~\ref{main} we get that if
\[
\frac{|E|^r}{2(r-1)^{r-1}}\frac{4}{(2r+1)!} < 1,
\]
than hypergraph is $r$-colorable.
Summing up,
\[
L_3 \geq \lim\limits_{r\to \infty} \sqrt[r]{\frac{(2r+1)!(r-1)^{r-1}}{2}} \frac{1}{r^3} = \frac{4}{e^2}.
\]
\end{proof}

\section{Analysis of the Akolzin--Shabanov proof}

We rewrite the proof from~\cite{akolzin2016colorings} to get optimal constant in the case $n=3$.

First, for every vertex $v$ introduce the weight $w(v)$ as randomly (accordingly to the uniform distribution and independently) chosen number from $[0,1]$.
Fix parameters $p \in [0,1]$, $a < r$. An edge $e$ is called \textit{bad} if
\[
\max_{v \in e} w(v) - \min_{v \in e} w(v) \leq \frac{1 - p}{a}; 
\]
otherwise it is called \textit{good}.

The coloring algorithm is the following. First we color a (random) subhypergraph, consisting of all good edges, in $a$ colors via Pluhar approach; then 
we color (or recolor) some vertices from bad edges in unused $r-a$ colors.
If Pluhar approach succeed (i.e. there is no ordered $a$-chains) and we have at most $(n-1)(r-a)$ bad edges, then the algorithm return a proper $r$-coloring.
Let us evaluate the probability of success.

\begin{lemma}[Akolzin--Shabanov~\cite{akolzin2016colorings}]
\[
P \left [ e \mbox{ is bad} \right] = \left ( \frac{1-p}{a}\right )^{n-1} \left (\frac{1-p}{a} + n\left ( 1 - \frac{1-p}{a}  \right ) \right ) \leq
n \left ( \frac{1-p}{a}\right )^{n-1} = 3\left ( \frac{1-p}{a}\right )^{2}.
\]
\end{lemma}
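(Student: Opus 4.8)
The plan is to observe that the event ``$e$ is bad'' depends only on the $n$ weights of the vertices of $e$, which are i.i.d.\ uniform on $[0,1]$, and that it is precisely the event that the \emph{range} (maximum minus minimum) of these $n$ numbers is at most $\delta:=\frac{1-p}{a}$. So the whole lemma reduces to a short order-statistics computation of $P[\text{range}\le \delta]$ for $n$ i.i.d.\ uniform points, followed by a trivial inequality.

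First I would condition on which of the $n$ vertices of $e$ carries the smallest weight. With probability $1$ there are no ties, so these $n$ events partition a probability-$1$ event and are equiprobable; hence it suffices to compute $n$ times the probability of the event ``$e$ is bad and $v_1$ has the (unique) smallest weight''. Integrating over the value $x\in[0,1]$ of $w(v_1)$: given $w(v_1)=x$, each of the remaining $n-1$ weights must lie in $[x,\min(x+\delta,1)]$, an event of probability $\bigl(\min(\delta,1-x)\bigr)^{n-1}$. Splitting the resulting integral at $x=1-\delta$ to handle the truncation at the right endpoint,
\[
P[e\text{ is bad}] = n\int_0^1 \bigl(\min(\delta,1-x)\bigr)^{n-1}\,dx = n\left[(1-\delta)\,\delta^{n-1} + \int_{1-\delta}^1 (1-x)^{n-1}\,dx\right] = n(1-\delta)\delta^{n-1}+\delta^{n},
\]
which rearranges to $\delta^{n-1}\bigl(\delta + n(1-\delta)\bigr)$, exactly the claimed identity after substituting back $\delta=\frac{1-p}{a}$.

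Then the stated bound is immediate: $\delta^{n-1}(\delta+n(1-\delta)) = n\delta^{n-1}-(n-1)\delta^{n}\le n\delta^{n-1}$ since $\delta\in[0,1]$ forces $(n-1)\delta^{n}\ge 0$; for $n=3$ the right-hand side is $3\left(\frac{1-p}{a}\right)^2$. The only point requiring a little care — hardly an obstacle — is the bookkeeping of the truncation in the integral and the remark that summing over ``which vertex is smallest'' is legitimate because those events are disjoint up to a null set. An alternative, even shorter route is to quote the density $n(n-1)r^{n-2}(1-r)$ of the range of $n$ i.i.d.\ uniform$[0,1]$ variables and integrate it over $[0,\delta]$; either way one lands on $n\delta^{n-1}-(n-1)\delta^{n}$.
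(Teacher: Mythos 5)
Your proof is correct: the event that $e$ is bad is exactly the event that the range of $n$ i.i.d.\ uniform weights is at most $\delta=\frac{1-p}{a}$, and your order-statistics computation yields $n\delta^{n-1}-(n-1)\delta^{n}=\delta^{n-1}\bigl(\delta+n(1-\delta)\bigr)$, which matches the stated identity, with the inequality and the $n=3$ specialization following trivially. The paper itself states this lemma without proof (citing Akolzin--Shabanov), and your argument is the standard computation one would find there, so there is nothing to compare beyond noting that your write-up is a complete, self-contained verification.
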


Let $C(A_1, \dots , A_a)$ denote the event that all the edges $A_j$ are good and $(A_1, \dots , A_a)$ is an ordered $a$-chain.
\begin{lemma}[Akolzin--Shabanov~\cite{akolzin2016colorings}]
\[
P \left [ C(A_1, \dots , A_a) \right] \leq   a^{-a(n-2)} \frac{p^{a-1}}{(a-1)!} = a^{-a} \frac{p^{a-1}}{(a-1)!}.
\]
\end{lemma}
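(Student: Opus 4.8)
Write $c := \frac{1-p}{a}$ and, for $1\le i\le a-1$, let $x_i$ be the unique vertex in $A_i\cap A_{i+1}$. The plan is to condition on the weights $t_i:=w(x_i)$ of these ``link'' vertices and then integrate. The structural observation to record first is that, if $(A_1,\dots,A_a)$ is an ordered $a$-chain for the order induced by $w$, then each $x_i$ is forced to be simultaneously the $w$-largest vertex of $A_i$ and the $w$-smallest vertex of $A_{i+1}$: the ordering gives $\max A_i\le\min A_{i+1}$, while $x_i\in A_i\cap A_{i+1}$ gives $\min A_{i+1}\le x_i\le\max A_i$, so equality holds throughout. Hence, conditionally on values $t_1<t_2<\dots<t_{a-1}$ (if the $t_i$ are not increasing, $C(A_1,\dots,A_a)$ cannot occur), the event $C$ amounts to placing the $n-1$ private vertices of $A_1$ below $t_1$, the $n-2$ private vertices of each middle edge $A_i$ inside $(t_{i-1},t_i)$, and the $n-1$ private vertices of $A_a$ above $t_{a-1}$, together with the goodness requirements.

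Goodness translates cleanly in these coordinates: for a middle edge $A_i$ it is the interval constraint $t_i-t_{i-1}>c$; for $A_1$ it forces $t_1>c$ (and the lowest private vertex of $A_1$ to fall below $t_1-c$); for $A_a$ it forces $1-t_{a-1}>c$, symmetrically. Since the private vertex sets of $A_1,\dots,A_a$ are pairwise disjoint and their weights are independent of the $t_i$, the conditional probability of $C$ factorises and, on the region carved out by the three families of ``$>c$'' inequalities (and vanishing off it), equals $(t_1^{n-1}-c^{n-1})\cdot\prod_{i=2}^{a-1}(t_i-t_{i-1})^{n-2}\cdot((1-t_{a-1})^{n-1}-c^{n-1})$. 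I would then discard the goodness of the two end edges inside this factor, bounding $t_1^{n-1}-c^{n-1}\le t_1^{n-1}\le t_1^{n-2}$ (using $t_1\le 1$) and $(1-t_{a-1})^{n-1}-c^{n-1}\le(1-t_{a-1})^{n-2}$ likewise, while keeping every ``$>c$'' inequality as a constraint on the domain of integration.

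The heart of the argument is then a single application of AM--GM. Putting $u_1=t_1$, $u_i=t_i-t_{i-1}$ for $2\le i\le a-1$, and $u_a=1-t_{a-1}$, these are $a$ positive reals with $u_1+\dots+u_a=1$, so the integrand is at most $\bigl(\prod_{i=1}^a u_i\bigr)^{n-2}\le a^{-a(n-2)}$, a bound independent of the $t_i$. It remains to estimate the volume of the domain $\{\,t_1>c,\ t_i-t_{i-1}>c\ (2\le i\le a-1),\ t_{a-1}<1-c\,\}$ in $\mathbb{R}^{a-1}$; the triangular change of variables $v_1=t_1-c$, $v_j=t_j-t_{j-1}-c$ has unit Jacobian and maps it onto the simplex $\{\,v_j>0,\ \sum_{j=1}^{a-1}v_j<1-ac=p\,\}$, whose volume is $p^{a-1}/(a-1)!$. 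Multiplying the two estimates yields the claimed bound, and substituting $n=3$ gives the stated equality.

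The one genuinely delicate step is the structural lemma — that in an ordered chain each link vertex is the maximum of one edge and the minimum of the next, so that the private vertices of distinct edges are confined to pairwise disjoint weight intervals and the conditional probability factorises. After that, the endpoint estimate, the AM--GM bound, and the simplex volume are routine; it is also worth spot-checking the degenerate case $a=2$, where there are no middle edges and the bound collapses to $t_1^{n-2}(1-t_1)^{n-2}\le 4^{-(n-2)}=a^{-a(n-2)}$ over an interval of length $p$.
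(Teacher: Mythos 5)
Your proof is correct. The paper itself states this lemma without proof, importing it from Akolzin--Shabanov, so there is nothing internal to compare against; your argument (link vertices forced to be the max of one edge and the min of the next, conditioning on their weights, the factorised conditional probability $\bigl(\prod u_i\bigr)^{n-2}\le a^{-a(n-2)}$ by AM--GM, and the shifted simplex of volume $p^{a-1}/(a-1)!$ coming from the goodness gaps) is the standard route to this bound and all the steps, including the endpoint estimates $t_1^{n-1}-c^{n-1}\le t_1^{n-2}$ and the $a=2$ degenerate case, check out.
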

By Theorem~\ref{ind} we have at most $(|E|/a)^a$ $a$-chains. Define $c = |E|/r^3$; we need
\[
\left ( \frac{|E|}{a} \right )^a a^{-a} \frac{p^{a-1}}{(a-1)!} = \left ( (1+o(1)) \frac{|E|pe}{a^{3}} \right )^a
= \left ( (c+o(1)) \frac{r^3pe}{a^{3}} \right )^a < 1.
\]
Also we need at most $(n - 1)(r - a) = 2(r - a)$ bad edges:
\[
P [X > 2(r - a)] \leq \frac{1}{2(r-a)}\frac{3(1-p)^{2}|E|}{a^2} < 1.
\]
Define $x = r/a$. Then we need $cx^3pe < 1$ and 
\[
\frac{3c(1-p)x^3}{2(x-1)} < 1.
\]
Computer simulations gives that for $p = 0.741$ and $x = 1.05$ the algorithm with $c = 0.42$ returns a proper coloring with positive probability, which implies
$L_3 > 0.42$.

If we simply follow the initial proof, the required inequalities are
\[
cx^3pe^2 < 1 \quad \mbox{and} \quad \frac{3c(1-p)x^3}{2(x-1)} < 1.
\]
So pure Akolzin--Shabanov approach gives $L_3 > 0.205$. Both constants are worse than in Theorem~\ref{hahaha}.

\section{Open problems}

\begin{itemize}
    \item First, recall that the Erd{\H o}s conjecture is still open in the case $n = 3$.

    \item Also it is natural to ask if $m(n,r)$ is regular on the first variable, i.e.  
\[
\lim\limits_{n\to \infty} \frac{m(n+1,r)}{m(n,r)} = r?
\]

    \item In the proof of Theorem~\ref{main} we consider an auxiliary graph $G$. 
    The problem is to describe the set of graphs, which may be achieved from an $r$-chromatic $n$-uniform hypergraph.
    
\end{itemize}

\paragraph{Acknowledgements.} I am grateful to Alexander Sidorenko for an introduction in inducibility theory and to 
Fedor Petrov, who noted some deficiencies in the paper. 
Also Alexander Sidorenko pointed to an inattention, relating to the use of Tur{\'a}n numbers.

\bibliographystyle{plain}
\bibliography{main}


\end{document}